\theoremstyle{plain}
\newtheorem{theorem}{Theorem}[section]
\newtheorem{proposition}[theorem]{Proposition}
\newtheorem{lemma}[theorem]{Lemma}
\newtheorem{corollary}[theorem]{Corollary}
\theoremstyle{definition}
\theoremstyle{remark}
\newtheorem{remark}[theorem]{Remark}
\title{\large{\bf The Joint Range of Quadratic Mapping on Hilbert Space}\thanks{This work was sponsored by Vietnam Ministry of Education and Training grant number: B2023-TDV-03. \, \, \, \, \, \, \, \, \, \, \, \, \, \, \, \, \, \, \, \, \, \, \, \, \, \, \, \, \, \, \, \, \, \, \, \, \, \, \, \, \, \, \, \, \, \, \, \, \, \, \, \, \, \, \, \, \, \, \, \, \, \, \, \, \, \, \, \, \,  \, \, \, \, \, {\bf{2010 \emph{Mathematics Subject Classification.}}} 90C20,  90C22, 90C32 \, \, \, \, \, \, \, \, \, \, \, \, \, \, \, \, \, \, \, \, \, \, \, \, \, \, \, \, \, \, \, \, \, \, \, {\bf{\emph{Key words and phrases.}}} Joint range of quadratic mapping, Convex quadratic programming, S-lemma, Hidden convex structure}}
\author{\small{Huu-Quang Nguyen\thanks{Corresponding author.}}}
\date{}
  \renewcommand\section{\@startsection {section}{1}{\z@}%
  	{-3.5ex \@plus -1ex \@minus -.2ex}%
  	{2.3ex \@plus.2ex}%
  	{\normalfont\large}}
  \renewcommand\subsection{\@startsection{subsection}{2}{\z@}%
  	{-3.25ex\@plus -1ex \@minus -.2ex}%
  	{1.5ex \@plus .2ex}%
  	{\normalfont\Large\bfseries}}
  \renewcommand\subsubsection{\@startsection{subsubsection}{3}{\z@}%
  	{-3.25ex\@plus -1ex \@minus -.2ex}%
  	{1.5ex \@plus .2ex}%
  	{\normalfont\large\bfseries}}
\begin{document}

\maketitle

\fontsize{12pt}{13.65pt}\selectfont

\begin{abstract}
	We present a novel technical method for analyzing the hidden convex structure embedded in the joint range of a quadratic mapping defined on a Hilbert space. Our approach stands out by relying exclusively on elementary mathematical principles.
\end{abstract}


 
\begin{center}  \section{Introduction}\end{center}
The study of convex structures hidden within the joint range of a quadratic map on $\mathbb{R}^n$ has attracted significant attention in the mathematical literature (see \cite{Dines}, \cite{Polyak}, \cite{Beck07}, \cite{TT} \cite{Flores-Carcamo}, \cite{Jeyakumar3}, \cite{ Flores-Opazo}, \cite{Flores-Opazo2}, \cite{Quang}). This topic has proven to be a fundamental tool in analyzing the strong duality of nonconvex quadratic programming. However, this problem is inherently difficult due to the complex geometric and algebraic properties of quadratic mappings.

In this paper, we explore the image of a quadratic mapping \( F \), which consists of two functions, \( f(x) \) and \( g(x) \), on a Hilbert space \( \mathcal{H} \). While previous works, such as references \cite{Baccari} and \cite{Contino}, have analyzed the convexity of the set \( F(\mathcal{H}) \), our focus shifts to a more practical problem: examining the convexity of the set \( F(\mathcal{H}) + \bigwedge \), where \( \bigwedge \) is a convex cone in \( \mathbb{R}^2 \). More specifically, Baccari in \cite{Baccari} and Contino in \cite{Contino} prove the following results:
%
\begin{theorem}[\cite{Baccari}]\label{dl1.1}
	Let $A_i: \mathcal{H} \rightarrow \mathcal{H}$ is self-adjoint for all $i=1,2$ and $\exists \mu=\left(\mu_1, \mu_2\right) \in \mathbb{R}^2 \mid \mu_1 A_1+\mu_2 A_2>0$. Assume that the set $F(\mathcal{H})=\left\{\left(f_1(x), f_2(x)\right) \in \mathbb{R}^2: x \in \mathcal{H}\right\}$ is closed, where $f_j(x)=\left\langle A_j x, x\right\rangle+\left\langle x, a_j\right\rangle+b_j, j=1,2$. 	
	Then, $F(\mathcal{H})=\left\{\left(f_1(x), f_2(x)\right) \in \mathbb{R}^2: x \in \mathcal{H}\right\}$ is a convex set.
\end{theorem}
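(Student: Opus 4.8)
The plan is to prove convexity of $F(\mathcal{H})$ directly by showing that for any two points $u,v\in F(\mathcal{H})$, the whole segment $[u,v]$ lies in $F(\mathcal{H})$. Pick $x,y\in\mathcal{H}$ with $F(x)=u$ and $F(y)=v$, and consider the two-dimensional (or lower) subspace $V=\operatorname{span}\{x,y\}$. The key observation is that each $f_j$, when restricted to an affine path connecting $x$ to $y$, becomes a scalar polynomial, so the problem reduces to understanding the image of a quadratic map on a finite-dimensional piece of $\mathcal{H}$. The natural tool is the classical Dines theorem (the $n=2$, homogeneous case), which says that the joint numerical range of two quadratic forms on any real vector space is convex; the positive-definiteness hypothesis $\mu_1A_1+\mu_2A_2>0$ is what lets me control the behavior and pass from forms to the inhomogeneous functions $f_j$.

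Concretely, I would first homogenize. Introduce an extra scalar variable $t$ and set $\tilde f_j(x,t)=\langle A_j x,x\rangle+t\langle x,a_j\rangle+b_jt^2$, so that $f_j(x)=\tilde f_j(x,1)$. Then I would invoke Dines' theorem to conclude that the set $\{(\tilde f_1(x,t),\tilde f_2(x,t)):(x,t)\in\mathcal{H}\times\mathbb{R}\}$ is convex, and argue that the dehomogenized slice at $t=1$, together with the definiteness condition, recovers $F(\mathcal{H})$ up to the recession directions supplied by $\mu$. The definiteness hypothesis guarantees that the quadratic form $\mu_1\tilde f_1+\mu_2\tilde f_2$ is positive definite, so the boundary of the image is well-behaved and no pathological unbounded directions destroy convexity. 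Finally I would use the \emph{closedness} assumption on $F(\mathcal{H})$ to pass from convexity of the image of the homogenized map (which is automatically a cone and hence convex) back to convexity of $F(\mathcal{H})$ itself, since closedness rules out the gap between a set and its closure that typically obstructs such arguments in infinite dimensions.

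The main obstacle I anticipate is the infinite-dimensionality of $\mathcal{H}$: the classical Dines/Brickman arguments rely on finite-dimensional compactness (e.g.\ the unit sphere in $\mathbb{R}^n$ is compact, which fails in $\mathcal{H}$). I would circumvent this either by reducing to a separable or finite-dimensional subspace containing $x$ and $y$ — checking that $f_j$ restricted to such a subspace still satisfies the definiteness requirement inherited from $\mu_1A_1+\mu_2A_2>0$ — or by exploiting the definiteness directly to obtain weak compactness of sublevel sets. The delicate point is that restricting $A_j$ to a finite-dimensional subspace need not preserve strict positive-definiteness of $\mu_1A_1+\mu_2A_2$ on that subspace unless chosen carefully, but in fact the restriction of a positive-definite self-adjoint operator to any subspace remains positive definite, so this reduction is legitimate. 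Once the finite-dimensional reduction is secured, the convexity follows from the classical result, and the closedness hypothesis ties the argument together; I expect the bulk of the technical care to go into verifying that taking segments and restricting to finite-dimensional subspaces interacts correctly with the closedness of $F(\mathcal{H})$.
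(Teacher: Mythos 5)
The paper does not actually prove Theorem \ref{dl1.1}: it is quoted as background from Baccari--Samet, and the paper's own contribution (Theorem \ref{kq2}) is the different statement that $F(\mathcal{H})+\bigwedge$ is convex with no hypotheses, proved by an elementary analysis of how lines in $\mathcal{H}$ map to points, rays, lines or parabolas in $\mathbb{R}^2$. So your proposal has to stand on its own, and it has a genuine gap at its central step, the ``dehomogenization.'' After homogenizing you obtain the set $K=\{(\tilde f_1(x,t),\tilde f_2(x,t)):(x,t)\in\mathcal{H}\times\mathbb{R}\}\subset\mathbb{R}^2$, which is indeed a convex cone by Dines. But $F(\mathcal{H})=\{(\tilde f_1(x,1),\tilde f_2(x,1)):x\in\mathcal{H}\}$ is merely a \emph{subset} of $K$: there is no third coordinate recording $t$, so ``the dehomogenized slice at $t=1$'' is not a well-defined operation on $K$, and a subset of a convex set is of course not convex in general. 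The one-dimensional example $f_1(x)=x$, $f_2(x)=x^2$ (which satisfies $0\cdot A_1+1\cdot A_2>0$ and has closed image) shows how badly this fails: the homogenized image is $\{(s,r):r>0\}\cup\{(0,0)\}$, which is convex, while $F(\mathbb{R})$ is a parabola. To make homogenization work one must append $t^2$ as a \emph{third} output coordinate, invoke a three-form convexity theorem (Polyak's, which itself needs the definiteness hypothesis, a dimension restriction, and --- in infinite dimensions --- is essentially the content being proved), and then slice the resulting cone in $\mathbb{R}^3$ by the hyperplane where the third coordinate equals $1$. None of this is supplied, and it is exactly the place where the hypotheses $\mu_1A_1+\mu_2A_2>0$ and closedness must do their work.

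The finite-dimensional reduction is also not sound as described. If $w\in[u,v]$ with $u=F(x)$, $v=F(y)$, a preimage of $w$ need not lie in $\operatorname{span}\{x,y\}$, so convexity of $F(\mathcal{H})$ does not reduce to convexity of $F(V)$ for that particular $V$; and even if it did, $V$ can be one-dimensional, where the statement is false (the parabola example again), so ``the classical Dines theorem'' is not the tool that closes a restricted problem involving \emph{inhomogeneous} quadratics. Finally, ``closedness rules out the gap between a set and its closure'' is not an argument: the actual role of closedness in Baccari--Samet is to run a limiting argument and convert approximate preimages into exact ones. The proposal identifies the right circle of ideas but, as written, does not contain a proof of the theorem.
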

\begin{theorem}[\cite{Contino}]\label{dl1.2}
	Let $\mathcal{H}$ be a real inner product space, $3 \leq \operatorname{dim}(\mathcal{H}) \leq \infty$. Let $A_1, A_2 \in L(\mathcal{H})$ be such that $\mu_1 A_1+\mu_2 A_2>0$ for some $\mu_1, \mu_2 \in \mathbb{R}, a_1, a_2 \in \mathcal{H}$ and $b_1, b_2 \in \mathbb{R}$. Let $F=\left(f_1, f_2\right)$ be the non-homogeneous quadratic form defined by $f_j(x)=\left\langle A_j x, x\right\rangle+\left\langle x, a_j\right\rangle+b_j, j=1,2$. Then
	$
	F(\mathcal{H})=\left\{\left(f_1(x), f_2(x)\right) \in \mathbb{R}^2: x \in \mathcal{H}\right\}
	$
	is convex.
\end{theorem}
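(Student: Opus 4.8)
The plan is to prove Theorem \ref{dl1.2} in three stages: an algebraic normalization that trivializes the first component, a reduction to a three–dimensional subspace, and an elementary analysis of the planar region cut out by a minimum and a maximum value function.

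First I would normalize the pair $(f_1,f_2)$. Since convexity of $F(\mathcal H)$ is invariant under any invertible linear change of coordinates in the target $\mathbb R^2$, I may replace $(f_1,f_2)$ by $(\mu_1 f_1+\mu_2 f_2,\ f_2)$ and thereby assume $A_1>0$. Then $\langle x,y\rangle_1:=\langle A_1 x,y\rangle$ is an equivalent inner product, under which $\langle A_1 x,x\rangle=\|x\|_1^2$ and $A_2$ is again bounded and self-adjoint; completing the square in $f_1$ (a translation of the variable) and subtracting a constant (a translation of the image), I reduce to the case $f_1(x)=\|x\|^2$. Observe that the hypothesis $\mu_1A_1+\mu_2A_2>0$ is now automatic with $\mu=(1,0)$: it has been \emph{spent} to carry out this reduction. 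It therefore suffices to show that $G:=\{(\|x\|^2,f_2(x)):x\in\mathcal H\}$ is convex.

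Next I would remove the infinite dimension. Fix $p_0=F(x_0)$ and $p_1=F(x_1)$ in $G$ and $\theta\in[0,1]$; I must realize $(1-\theta)p_0+\theta p_1$ as a value of $F$. Choose a subspace $V\supseteq\operatorname{span}\{x_0,x_1\}$ with $\dim V=3$, which is possible precisely because $\dim\mathcal H\ge 3$. Replacing $A_2$ by its compression $P_VA_2P_V$ alters neither $\|x\|^2$ nor $f_2(x)$ for $x\in V$, while $f_1|_V=\|\cdot\|^2$ and $f_2|_V$ keep the required form. Hence it is enough to prove that $F(V)$ is convex for three–dimensional $V$; this is also where the delicate non-attainment phenomena of infinite dimensions disappear, since every point I need to produce already lives in a fixed finite–dimensional subspace.

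For the finite–dimensional core I would diagonalize $A_2|_V$ and study, for $r\ge 0$, the extremal functions $m(r)=\min_{\|y\|^2=r}f_2(y)$ and $M(r)=\max_{\|y\|^2=r}f_2(y)$. Two facts then force convexity of $F(V)$: the sphere $\{\|y\|^2=r\}$ is connected for $\dim V\ge 2$, so by continuity and the intermediate value theorem the image of $f_2$ on it is the full interval $[m(r),M(r)]$, giving $F(V)=\{(r,s):r\ge 0,\ m(r)\le s\le M(r)\}$; and $m$ is convex while $M$ is concave, which makes this region convex. The convexity of $m$ (equivalently, concavity of $M$) is the crux and the main obstacle: writing the Lagrange/secular equation $\sum_i c_i^2/(\lambda_i-t)^2=r$ for the constrained extremum, one must control the value function through the \emph{hard case}, where the linear coefficient $c_i$ vanishes in an extreme eigendirection and the optimal multiplier sticks at an eigenvalue. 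It is exactly here that $\dim V\ge 3$ (a genuinely two–dimensional sphere, with room for a nontrivial complementary eigenspace) guarantees that the extremal branch persists and the value functions stay convex and concave. An alternative route that sidesteps the secular analysis is to homogenize: the three forms $s^2,\ \|y\|^2,\ \langle By,y\rangle+s\langle c,y\rangle+\beta s^2$ on $V\times\mathbb R$ admit the positive–definite combination $s^2+\|y\|^2$, so their joint image is a convex cone, and $G=F(V)$ is recovered as its slice $\{s^2=1\}$; this trades the hard–case analysis for the convexity of the image of three homogeneous forms, into which the hypothesis $\dim\mathcal H\ge 3$ once more enters.
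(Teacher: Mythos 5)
First, a point of calibration: the paper does not prove Theorem \ref{dl1.2} at all --- it is quoted from \cite{Contino} as background, and the paper's own machinery (Lemmas \ref{kq1}--\ref{kq1b} and Theorem \ref{kq2}) is an entirely different, elementary parabola-geometry argument for the convexity of $F(\mathcal{H})+\bigwedge$ \emph{without} any definiteness hypothesis; that result does not subsume Theorem \ref{dl1.2}, so your proposal has to stand on its own. Your outer reductions are essentially sound: the target change of coordinates $(f_1,f_2)\mapsto(\mu_1f_1+\mu_2f_2,f_2)$ (invertible only when $\mu_1\neq 0$; if $\mu_1=0$ use $(f_1,f_2)\mapsto(\mu_2f_2,f_1)$ instead), and the restriction to a three-dimensional $V\supseteq\operatorname{span}\{x_0,x_1\}$, which correctly disposes of both the infinite dimension and the attainment issues. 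One technical caveat: do the restriction to $V$ \emph{before} completing the square, since in a mere inner product space (or when $A_1$ is positive but not boundedly invertible) the vector $A_1^{-1}a_1$ need not exist, whereas on a finite-dimensional $V$ it does.

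The genuine gap is at the step you yourself flag as ``the crux and the main obstacle.'' Given the interval description $F(V)=\{(r,s):r\ge 0,\ m(r)\le s\le M(r)\}$, the pair of statements ``$m$ convex, $M$ concave'' is \emph{equivalent} to the convexity of $F(V)$, i.e., it \emph{is} the theorem in the reduced setting. Gesturing at the secular equation $\sum_i c_i^2/(\lambda_i-t)^2=r$ and the ``hard case'' does not discharge it: no monotonicity or second-derivative estimate for the value function is given, the gluing of the interior branch to the boundary branch is not analyzed, and the assertion that $\dim V\ge 3$ is ``exactly'' what rescues the hard case is unsubstantiated (a direct computation of the hard case with $f_1=\|y\|^2$ in dimension $2$, say $f_2=\lambda_1y_1^2+\lambda_2y_2^2+c_2y_2$ with $\lambda_1<\lambda_2$, gives $m(r)=\lambda_2 r-|c_2|\sqrt r$ glued $C^1$-smoothly to $\lambda_1 r-c_2^2/(4(\lambda_2-\lambda_1))$, both convex --- so the stated mechanism does not actually locate where the dimension hypothesis is used). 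The alternative homogenization route is a legitimate reduction, but only to the convexity of the joint range of \emph{three} homogeneous quadratic forms admitting a positive definite combination, a theorem of essentially the same depth as the target, which is likewise left unproved. As written, the proposal is a plausible roadmap rather than a proof.
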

Both of the above theorems are used to establish an important result known as a type of S-lemma on Hilbert space, which ensures that under the assumptions stated in Theorem \ref{dl1.1} or Theorem \ref{dl1.2}, if $f_2(x^*)<0$ for some $x^*$ in $\mathcal{H}$, the following statements are equivalent.
\begin{itemize}
	\item[(i)] $\not\exists x\in \mathcal{H}$ such that $f_1(x)<0$ and $f_2(x)\leq 0$.
	\item[(ii)] $\exists \lambda \geq 0$ such that $f_1(x)+\lambda f_2(x)\ge 0$ for all $x\in\mathcal{H}$.
\end{itemize}

Our main result ensures the convexity of the set \( F(\mathcal{H}) + \bigwedge \) without imposing any assumptions. Consequently, we derive a variant of the S-lemma as stated above, also without any assumptions. Notably, our method is independent of the dimension of \( \mathcal{H} \), making it applicable to both finite- and infinite-dimensional settings. Our results hold even when \( \mathcal{H} \) is replaced by an affine manifold in any Hilbert space. This work unifies and extends many key findings from the literature. Importantly, our approach is based solely on elementary mathematical principles.

 {\centering \section{Preliminaries}    }
In this section, we present some simple geometric properties of a parabola in \( \mathbb{R}^2 \), which can be found in any standard linear geometry book. Clearly, a parabolic curve in \( \mathbb{R}^2 \) can be described as the solution set of an equation of the form \( x^T A x + a^T x + a_0 = 0 \), where \( A \) is a symmetric \( 2 \times 2 \) matrix satisfying \(\text{rank}(A) = 1\) and \( A \succeq 0 \) (or \( A \preceq 0 \)), with \( a \not\in \mathcal{N}(A) = \{ x \in \mathbb{R}^2 : Ax = 0 \} \) and \( a_0 \in \mathbb{R} \). More generally, we cite a result from \cite{Audin} as follows.

\begin{proposition}
Let $\mathcal{C}$ be a proper conic with nonempty image in $\mathbb{R}^2$. There exists an aﬃne frame in which an equation of $\mathcal{C}$ has one (and
only one) of the forms
$$x^2+ y^2=1 ~ (\text{ellipse}), x^2- y^2=1~ (\text{hyperbola}) \text{ or } y^2- x=0 ~(\text{parabola}).$$
\end{proposition}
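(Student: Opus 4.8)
The plan is to put the defining equation of $\mathcal{C}$ into a normal form by a sequence of affine changes of coordinates, each of which maps conics to conics and therefore does not alter the classification. Writing $\mathcal{C}$ as the zero set of
$$q(x,y)=\begin{pmatrix} x & y\end{pmatrix} A \begin{pmatrix} x \\ y\end{pmatrix}+b^{T}\begin{pmatrix} x \\ y\end{pmatrix}+c,$$
where $A$ is a symmetric $2\times 2$ matrix, $b\in\mathbb{R}^2$, and $c\in\mathbb{R}$, I would first invoke the spectral theorem to produce an orthogonal matrix (a rotation) diagonalizing $A$. Since a rotation is affine, it preserves the image of $\mathcal{C}$, and in the new frame the quadratic part becomes $\lambda_1 x^2+\lambda_2 y^2$ with $\lambda_1,\lambda_2\in\mathbb{R}$ the eigenvalues of $A$. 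The number of nonzero eigenvalues, i.e. $\operatorname{rank}(A)$, is then the organizing invariant for the rest of the argument.

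Next I would complete the square to remove each linear term whose eigenvalue is nonzero and translate the origin accordingly, splitting into two regimes. If $\operatorname{rank}(A)=2$, both linear terms can be absorbed, leaving $\lambda_1 x^2+\lambda_2 y^2=d$; rescaling each axis by an affine diagonal map then yields one of $\pm x^2\pm y^2=1$ or $\pm x^2\pm y^2=0$. The hypotheses that $\mathcal{C}$ is proper and has nonempty image eliminate the empty case $x^2+y^2=-1$ and the degenerate cases (the single point $x^2+y^2=0$ and the crossing lines $x^2-y^2=0$), leaving exactly the ellipse $x^2+y^2=1$ and the hyperbola $x^2-y^2=1$. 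If $\operatorname{rank}(A)=1$, say $\lambda_2=0$, completing the square in $x$ gives $\lambda_1 x^2+\beta y+\gamma=0$; here properness forces $\beta\neq 0$, since otherwise one obtains parallel lines, a single line, or the empty set, and a translation in $y$ followed by a rescaling produces the parabola $y^2-x=0$ after relabeling the axes.

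Finally, for the ``one and only one'' assertion I would check that the three normal forms are pairwise inequivalent under affine maps, using properties invariant under affine change of coordinates: the ellipse is bounded while the other two are unbounded, and the hyperbola is disconnected (two branches) whereas the parabola is connected. I expect the main obstacle to be the bookkeeping of the case analysis rather than any single deep step: the spectral reduction and the topological separation of the three types are routine, but one must carefully track signs through the completions of squares and the rescalings, and pin down precisely which sign patterns are discarded by \emph{proper} as opposed to by \emph{nonempty image}, so that the enumeration of degenerate subcases is exhaustive and no other normal form survives.
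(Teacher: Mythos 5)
Your argument is correct in outline and is the standard affine classification of conics; note, however, that the paper does not prove this proposition at all --- it is quoted as a cited result from Audin's \emph{Geometry} and used as a black box, so there is no internal proof to compare against. Two points in your reduction should be tightened before it is complete. First, you split only into $\operatorname{rank}(A)=2$ and $\operatorname{rank}(A)=1$; you must also dispose of $\operatorname{rank}(A)=0$ (then $q$ is affine-linear and its zero set is a line, a plane, or empty, none of which is a proper conic), otherwise the case analysis is not exhaustive. Second, the division of labour between ``proper'' and ``nonempty image'' that you flag as a worry is settled by invoking the actual definition: the conic is proper exactly when the homogenized $3\times 3$ symmetric matrix $\begin{pmatrix} A & b/2 \\ b^{T}/2 & c \end{pmatrix}$ is invertible, and an affine change of frame $v\mapsto Mv+t$ acts on this matrix by congruence with the invertible matrix $\begin{pmatrix} M & t \\ 0 & 1 \end{pmatrix}$, so properness is preserved through every step of your reduction and can be tested on the normal form. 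This shows that the point $x^2+y^2=0$, the crossing lines $x^2-y^2=0$, and the rank-one degenerations with $\beta=0$ are all excluded by properness, whereas the empty ellipse $x^2+y^2=-1$, which \emph{is} proper, is excluded only by the nonempty-image hypothesis. With those two items filled in, the proof goes through; the uniqueness argument via boundedness and connectedness, both of which are affine invariants, is correct as stated.
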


The remaining results can be left as exercises, but we also provide their proofs to maintain the completeness of the paper.
\begin{lemma}\label{kq1}
Given a parabola $\mathcal{C}$ in $\mathbb{R}^2$, let $\psi(x)=x^TAx+a^Tx+a_0=0$ be the equation of  $\mathcal{C}$, where $A\succeq 0$. Assume that $\bar{x}$ and $\bar{y}$ are two distinct points on $\mathcal{C}$, and let $z^*\in (\bar{x}, \bar{y})$. Then $\psi(z^*)<0$.
\end{lemma}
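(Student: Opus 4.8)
The plan is to use the normal-form description of a parabola. Since $\mathcal{C}$ is a parabola with $A \succeq 0$ and $\operatorname{rank}(A)=1$, I would first put $\psi$ into a convenient coordinate form. After an affine change of coordinates (justified by the Proposition cited above, or directly by diagonalizing $A$), the equation $\psi(x)=0$ can be written as $u^2 - v = 0$, i.e. $v = u^2$, in coordinates $(u,v)$ depending affinely on $x=(x_1,x_2)$; equivalently, up to a positive scalar multiple, $\psi$ becomes $\tilde{\psi}(u,v) = u^2 - v$. The key point is that an affine change of variables maps the segment $(\bar x, \bar y)$ onto the segment joining the images of $\bar x$ and $\bar y$, and it preserves the \emph{sign structure} of $\psi$ only up to the overall sign of the leading coefficient. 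Because $A \succeq 0$, the leading coefficient stays positive, so the sign of $\psi$ at interior segment points is preserved, and it suffices to verify the claim for $\tilde{\psi}(u,v)=u^2-v$.

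With this reduction in hand, the second step is a short direct computation on the standard parabola $v=u^2$. Writing the two given points as $\bar x \mapsto (s, s^2)$ and $\bar y \mapsto (t, t^2)$ with $s \neq t$, a point $z^*$ in the open segment corresponds to $(u_*, v_*) = \big((1-\theta)s + \theta t,\ (1-\theta)s^2 + \theta t^2\big)$ for some $\theta \in (0,1)$. Then
\[
\tilde{\psi}(u_*, v_*) = u_*^2 - v_* = \big((1-\theta)s + \theta t\big)^2 - \big((1-\theta)s^2 + \theta t^2\big) = -\theta(1-\theta)(s-t)^2.
\]
Since $\theta \in (0,1)$ and $s \neq t$, the right-hand side is strictly negative, which gives $\tilde{\psi}(z^*) < 0$, hence $\psi(z^*) < 0$ after undoing the coordinate change. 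This is just the strict convexity of $u \mapsto u^2$ expressed through the strict Jensen inequality.

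The main obstacle I anticipate is not the computation but the bookkeeping of the reduction: I must make sure the affine normalization genuinely preserves the sign of $\psi$ rather than flipping it. Concretely, the normal form from the Proposition is obtained up to a nonzero multiplicative constant, and I need that constant to be positive in order to conclude $\psi(z^*)<0$ rather than $\psi(z^*)>0$. This is exactly where the hypothesis $A \succeq 0$ (as opposed to $A \preceq 0$) is used: it fixes the orientation of the parabola so that its interior (the region where $\psi<0$) is the convex side, and any chord between two boundary points lies in that interior. I would therefore state carefully that the change of frame can be taken so that the leading form remains positive semidefinite, so that the reduced equation is $u^2 - v = 0$ with a positive coefficient on $u^2$, and only then carry out the elementary estimate above.
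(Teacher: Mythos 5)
Your proof is correct, but it takes a different route from the paper's. You first pass to the affine normal form $u^2-v=0$ and then verify the claim by the explicit Jensen-gap computation $\tilde{\psi}(u_*,v_*)=-\theta(1-\theta)(s-t)^2<0$; the delicate point, as you rightly identify, is that the defining polynomial of $\mathcal{C}$ is only determined up to a nonzero scalar, so you must argue that the normalizing constant is positive, which does follow from $A\succeq 0$ since the quadratic part transforms by congruence. The paper avoids this bookkeeping entirely by working intrinsically: it restricts $\psi$ to the chord, obtaining a one-variable quadratic $e(t)=\psi(\bar{x}+t(\bar{y}-\bar{x}))$ whose only roots are $t=0$ and $t=1$ (a line meets a conic in at most two points), whose leading coefficient is $(\bar{y}-\bar{x})^TA(\bar{y}-\bar{x})\ge 0$ and hence positive, so $e(t)<0$ strictly between the roots. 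The two arguments rest on the same underlying fact --- positivity of the quadratic term along the chord direction --- but the paper's version needs no classification theorem and no sign normalization, while yours makes the geometric content (strict convexity of $u\mapsto u^2$) more transparent. One small point you should make explicit in your write-up: distinct points of the normalized parabola have distinct $u$-coordinates, which is what guarantees $s\ne t$ in your final inequality.
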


\begin{proof}
	It is easy to see that $\{\bar{x}+t(\bar{y}-\bar{x}): t\in \mathbb{R}\}$ represents the straight line ${d}$ that passes through $\bar{x}$ and $\bar{y}$, and that $z^*=\bar{x}+t^*(\bar{y}-\bar{x})$ for some $t^*\in (0, 1)$. Since $d\cap\mathcal{C}=\{\bar{x}, \bar{y}\}$, it follows that the quadratic equation in $t$, defined as $e(t):= \psi(\bar{x}+t(\bar{y}-\bar{x}))=0$, has two roots: $0$ and $1$. 
	Therefore $e(t)$ and the quadratic coefficient of $e(t)$ have opposite signs for any $t\in(0, 1)$. By the form of $\psi(x)$, the quadratic coefficient of $e(t)$ is $(\bar{y}-\bar{x})^TA(\bar{y}-\bar{x})$. Moreover, since $A\succeq 0$, the quadratic coefficient of $e(t)$ must be positive. Therefore, $e(t)<0$ for any $t$ between $0$ and $1$. Consequently,  $e(t^*)=\psi(z^*)<0$.
\end{proof}
The following result states that a parabola in $\mathbb{R}^2$ cannot enclose two rays formed by two linearly independent vectors.
\begin{lemma}\label{kq1a}
	Given a parabola $\mathcal{C}$ in $\mathbb{R}^2$, let $\psi(x)=x^TAx+a^Tx+a_0=0$ be the equation of  $\mathcal{C}$, where $A\succeq 0$. Assume that $\Gamma_1=\{z+t b: t\in (-\infty, 0]\}$ and $\Gamma_2=\{z+t c: t\in (-\infty, 0]\}$, where $\{b, c\}$ is linearly independent, and that  $\psi(z)<0$. Then, either $\Gamma_1\cap \mathcal{C}\ne \emptyset$ or   $\Gamma_2\cap \mathcal{C}\ne \emptyset$.
\end{lemma}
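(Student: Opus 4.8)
The plan is to restrict $\psi$ to each ray and study the resulting one-variable quadratic, exactly in the spirit of the proof of Lemma \ref{kq1}. Set $e_1(t) := \psi(z + tb)$ and $e_2(t) := \psi(z + tc)$. Each $e_i$ is a quadratic in $t$ whose leading coefficient is $b^TAb$ (resp. $c^TAc$) and whose constant term is $e_i(0) = \psi(z) < 0$. Since $\Gamma_1$ and $\Gamma_2$ correspond to the parameter range $t \le 0$, it suffices to exhibit, for at least one index $i$, a value $t_i < 0$ with $e_i(t_i) = 0$; the point $z + t_i(\cdot)$ then lies in $\Gamma_i \cap \mathcal{C}$.

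The key structural observation is that, because $\mathcal{C}$ is a parabola, $A$ is positive semidefinite of rank exactly one, so $A = vv^T$ for some $v \ne 0$ and $\mathcal{N}(A) = \{x : v^Tx = 0\}$ is a one-dimensional subspace. Consequently $b^TAb = (v^Tb)^2$ vanishes precisely when $b \in \mathcal{N}(A)$, and likewise for $c$. If both $b^TAb = 0$ and $c^TAc = 0$, then $b$ and $c$ would both lie in the line $\mathcal{N}(A)$, contradicting the linear independence of $\{b,c\}$. Hence at least one of the two quadratic coefficients is strictly positive; say $b^TAb > 0$ (the argument is symmetric in the other case).

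For that index, $e_1$ is a genuine upward-opening parabola in $t$ with $e_1(0) = \psi(z) < 0$, so $e_1(t) \to +\infty$ as $t \to -\infty$; by the intermediate value theorem (equivalently, Vieta gives product of roots $\psi(z)/(b^TAb) < 0$, so the roots straddle $0$) there is a root $t_1 < 0$. Then $z + t_1 b \in \Gamma_1$ and $\psi(z + t_1 b) = 0$, whence $z + t_1 b \in \Gamma_1 \cap \mathcal{C}$, which is the desired conclusion.

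I expect the only genuine subtlety to be the degenerate directions: if a direction, say $b$, lies in $\mathcal{N}(A)$, then $e_1$ is merely affine (or constant) and need not possess a root with $t \le 0$, so that ray may miss $\mathcal{C}$ altogether. The entire content of the lemma is therefore to preclude \emph{simultaneous} degeneracy of both directions, and this is exactly where the linear independence of $\{b,c\}$ combined with $\dim \mathcal{N}(A) = 1$ is essential; once one non-degenerate direction is secured, everything reduces to the sign analysis of a single quadratic, just as in Lemma \ref{kq1}.
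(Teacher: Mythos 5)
Your proposal is correct and follows essentially the same route as the paper: restrict $\psi$ to each ray, note that a strictly positive leading coefficient together with $\psi(z)<0$ forces a root at some $t<0$, and rule out simultaneous degeneracy of both directions using that a rank-one positive semidefinite $A$ cannot annihilate two linearly independent vectors. The paper merely packages this as a proof by contradiction (concluding $A=0$, which is incompatible with $\mathcal{C}$ being a parabola) rather than invoking the factorization $A=vv^T$ directly, but the substance is identical.
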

\begin{proof}
	Suppose, by contradiction, that $(\mathcal{C}\cap\Gamma_1) \cup (\mathcal{C}\cap\Gamma_2)$ is an empty set. This implies that the two equations of the form $\alpha t^2+\beta t+\gamma=0$,  
\begin{equation}\label{eq1}
\psi(z+tb)=((z+tb)^TA(z+tb)+a^T(z+tb)+a_0)=0 
\end{equation}
and
\begin{equation}\label{eq2}
	\psi(z+tc)=((z+tc)^TA(z+tc)+a^T(z+tc)+a_0)=0,
\end{equation}
have no solution in $(-\infty, 0]$. Now, if $b^TAb\ne 0$, then since $A\succeq 0$, we have $b^TAb>0$. Consequently, $\psi(z+tc)>0$ for sufficiently large $|t|$. This, combine with the condition $\psi(z)<0$, implies that the equation \eqref{eq1} has the solution in $(-\infty, 0]$,  leading to a contradiction. Hence  $b^TAb$ must be zero. By the same argument, $c^TAc$ must also be zero. 

Since $A$ is a symmetric matrix of order 2, $A\succeq 0$ and $b^TAb=c^TAc=0$ for two linearly independent vectors $\{b, c\}$, it follows that $A$ is the zero matrix. This  contradicts the assumption that $\psi(x)=x^TAx+a^Tx+a_0=0$ represents the equation of a parabola.
\end{proof} 

\begin{center}    { \section{The main result }   }\end{center}

We begin this section with some concepts and notations necessary for this paper. Let \(\mathcal{H}\) be a Hilbert space with inner product \(\langle \cdot, \cdot \rangle\). Define the quadratic functions \( f(x) \) and \( g(x) \) on \(\mathcal{H}\) by  
\[
f(x) = \langle Px, x \rangle + \langle p, x \rangle + p_0, \quad g(x) = \langle Qx, x \rangle + \langle q, x \rangle + q_0,
\]
where \( P \) and \( Q \) are linear self-adjoint operators on \(\mathcal{H}\), $p$ and $q$ are elements of $\mathcal{H}$, $p_0$ and $q_0$ are real numbers.

Let $b$ and $c$ be two linearly independent vectors in $\mathbb{R}^2$. The set  \begin{equation} 
	\bigwedge=\{\lambda b+\beta c :\lambda\geq 0, \beta\geq 0\}\nonumber
\end{equation}  forms a convex cone in $\mathbb{R}^2$. We will demonstrate that $F(\mathcal{H})+\bigwedge$ is convex.

\begin{lemma}\label{kq1b}
	Let $F(x)=(\langle Px, x\rangle+\langle p, x\rangle+ p_0, \langle Qx, x\rangle+\langle q, x\rangle+q_0)^T$ be a quadratic map from $\mathcal{H}$ to $\mathbb{R}^2$, and let $\curlyvee$ be a straight line in $\mathcal{H}$. Then $F(\curlyvee)$ is either a point, a ray, a straight line or a parabola in $\mathbb{R}^2$.
\end{lemma}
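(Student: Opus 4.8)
The plan is to reduce the two-variable problem to a single real parameter. Writing $\curlyvee = \{x_0 + tv : t \in \mathbb{R}\}$ with $x_0 \in \mathcal{H}$ and $v \in \mathcal{H} \setminus \{0\}$, I would expand $F(x_0+tv)$ and use that $P$ and $Q$ are self-adjoint, so that in each coordinate the two cross terms coincide. Each coordinate is then a real polynomial of degree at most two in $t$, and collecting them gives
\[
F(x_0+tv) = t^2 A + t B + C,
\]
where $A = (\langle Pv,v\rangle,\ \langle Qv,v\rangle)$, $B = (2\langle Px_0,v\rangle + \langle p,v\rangle,\ 2\langle Qx_0,v\rangle + \langle q,v\rangle)$ and $C = F(x_0)$ are fixed vectors of $\mathbb{R}^2$. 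Thus $F(\curlyvee)$ is exactly the image of the polynomial curve $t \mapsto t^2 A + tB + C$, and the whole statement becomes a classification of such curves.

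I would then split into cases according to $A$ and the dependence of $\{A,B\}$. If $A = 0$, the curve is affine, $t \mapsto tB + C$, whose image is a point when $B = 0$ and a straight line when $B \neq 0$. If $A \neq 0$ and $\{A,B\}$ is linearly dependent, say $B = \mu A$, then $t^2 A + tB = (t^2+\mu t)A$; since $s := t^2 + \mu t$ ranges over exactly the half-line $[-\mu^2/4,\ \infty)$ as $t$ runs over $\mathbb{R}$, the image is $\{sA + C : s \geq -\mu^2/4\}$, a ray. Finally, if $\{A,B\}$ is linearly independent, let $M$ be the invertible $2\times 2$ matrix with columns $A$ and $B$ and set $(\xi,\eta) = M^{-1}(x-C)$; in these coordinates the curve is $(\xi,\eta) = (t^2,t)$, hence it satisfies $\xi - \eta^2 = 0$. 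Because $\eta = t$ sweeps all of $\mathbb{R}$, the image is the full parabola.

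The step I expect to be the crux is the last case: confirming that the linearly independent situation produces a genuine parabola in the paper's sense, namely a curve $\psi(x) = x^T \tilde A x + \tilde a^T x + \tilde a_0 = 0$ with $\tilde A$ of rank one and semidefinite. Substituting $(\xi,\eta)=M^{-1}(x-C)$ into $\xi - \eta^2 = 0$ yields such a $\psi$ whose quadratic part equals $-\eta^2$, with $\eta$ a nonzero linear functional of $x$; this is a rank-one negative semidefinite form, which is admissible for a parabola by the $A \preceq 0$ alternative recorded in the Preliminaries. Equivalently, one may argue that the invertible affine map $x \mapsto M^{-1}(x-C)$ carries the standard parabola $\eta^2 = \xi$ onto $F(\curlyvee)$ and that such maps preserve the conic type. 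A minor but necessary point is to check that the degenerate subcases ($\mu = 0$, or $B = 0$ with $A \neq 0$) all collapse into the ray case and need no separate argument.
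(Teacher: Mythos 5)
Your proposal is correct and follows essentially the same route as the paper: both reduce $F(\curlyvee)$ to the planar curve $t\mapsto t^2A+tB+C$, split on whether the coefficient vectors $A$ and $B$ are linearly dependent (the paper phrases this as the vanishing of $\det\begin{pmatrix}\alpha&\beta\\ \alpha'&\beta'\end{pmatrix}$), and use an invertible affine map of $\mathbb{R}^2$ to normalize each case to a point, line, ray, or the standard parabola. The only differences are cosmetic, e.g.\ you use the map with columns $A,B$ where the paper uses a shear.
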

\begin{proof}
	Assume that $\{\bar{x}+t(\bar{y}-\bar{x}): t\in \mathbb{R}\}$ represents the straight line $\curlyvee$, where $\bar{x}$ and $\bar{y}$ are two given vectors in $\mathcal{H}$. Then 
	\begin{equation}
		F(\curlyvee)=\{(\alpha t^2+\beta t+ \gamma, \alpha' t^2+\beta' t+ \gamma')^T: t\in\mathbb{R}\},
	\end{equation}where $$\alpha t^2+\beta t+ \gamma=\langle P(\bar{x}+t(\bar{y}-\bar{x})), \bar{x}+t(\bar{y}-\bar{x})\rangle+\langle p, \bar{x}+t(\bar{y}-\bar{x})\rangle+p_0, $$ and $$\alpha' t^2+\beta' t+ \gamma'=\langle Q(\bar{x}+t(\bar{y}-\bar{x})), \bar{x}+t(\bar{y}-\bar{x})\rangle+\langle q, \bar{x}+t(\bar{y}-\bar{x})\rangle+q_0.$$
	
	\noindent{\bf Case 1:} $\begin{vmatrix} \alpha &\beta\\	\alpha'&\beta'\end{vmatrix}=0$. 
	
	- If $\alpha=\beta=\alpha'=\beta'=0$ then $F(\curlyvee)$ is a point. 
	
	- If $\alpha^2+\beta^2+\alpha'^2+\beta'^2\ne 0$, then, without loss of generality, we assume that $(\alpha, \beta)\ne (0, 0)$.  Hence, $(\alpha', \beta')=k(\alpha, \beta)$ for some $k\in\mathbb{R}$. Considering an affine transformation $L:\mathbb{R}^2\rightarrow\mathbb{R}^2: (u_1, u_2)^T\mapsto (u_1, -ku_1+u_2)^T$. We have \begin{equation}
		L(F(\curlyvee))=\{(\alpha t^2+\beta t+ \gamma, \gamma'-k\gamma)^T: t\in\mathbb{R}\}.
	\end{equation}
	Obviously, if $\alpha=0$ then $\beta$ must be nonzero, and consequently, $L(F(\curlyvee))$ is a straight line. If $\alpha\ne 0$, then $\alpha t^2+\beta t+ \gamma$ is either bounded from below or bounded from above, and consequently, $L(F(\curlyvee))$ is a ray. Thus, $L(F(\curlyvee))$ is either a line or a ray, and the same holds for $F(\curlyvee)=L^{-1}(L(F(\curlyvee)))$. 
	
	\noindent{\bf Case 2:} $\begin{vmatrix} \alpha &\beta\\	\alpha'&\beta'\end{vmatrix}\ne 0$. Obviously, $(\alpha, \alpha')\ne (0, 0)$. Without loss of generality, we assume that $\alpha\ne 0$. Considering an affine transformation 
	$\bar{L}:\mathbb{R}^2\rightarrow\mathbb{R}^2: (u_1, u_2)^T\mapsto (u_1, -\dfrac{\alpha'}{\alpha}u_1+u_2)^T$. We have 
	\begin{equation}\label{ct-05}
		\bar{L}(F(\curlyvee))=\{(\alpha t^2+\beta t+ \gamma, (\beta'-\dfrac{\alpha'}{\alpha}\beta)t+ \gamma'-\dfrac{\alpha'}{\alpha}\gamma)^T: t\in\mathbb{R}\}.
	\end{equation}
	
	Since $\begin{vmatrix} \alpha &\beta\\	\alpha'&\beta'\end{vmatrix}\ne 0$, we have $\beta'-\dfrac{\alpha'}{\alpha}\beta\ne 0$. Therefore, \eqref{ct-05} represents a parabola  in $\mathbb{R}^2$, and the same holds for $F(\curlyvee)=\bar{L}^{-1}(\bar{L}(F(\curlyvee)))$. 
\end{proof}

\begin{theorem}\label{kq2}
	Let $F(x)=(\langle Px, x\rangle+\langle p, x\rangle+ p_0, \langle Qx, x\rangle+\langle q, x\rangle+q_0)^T$ be a quadratic map from $\mathcal{H}$ to $\mathbb{R}^2$. Then the set $F(\mathcal{H})+\bigwedge$ is convex. 
\end{theorem}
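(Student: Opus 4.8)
The plan is to reduce the convexity of $F(\mathcal{H})+\bigwedge$ to a single segment-containment statement and then dispatch that statement line-by-line using Lemmas \ref{kq1b}, \ref{kq1}, and \ref{kq1a}. First I would record the elementary reduction: since $\bigwedge$ is a convex cone (in particular $\bigwedge+\bigwedge\subseteq\bigwedge$ and $\bigwedge$ is convex), it suffices to prove that for every $x_1,x_2\in\mathcal{H}$ the closed segment $[F(x_1),F(x_2)]$ lies in $F(\mathcal{H})+\bigwedge$. Indeed, given two points $F(x_1)+w_1$ and $F(x_2)+w_2$ with $w_1,w_2\in\bigwedge$, any convex combination equals $\bigl(\lambda F(x_1)+(1-\lambda)F(x_2)\bigr)+\bigl(\lambda w_1+(1-\lambda)w_2\bigr)$; the second summand lies in $\bigwedge$, and once we know the first summand lies in $F(\mathcal{H})+\bigwedge$, absorbing the extra cone element finishes the argument. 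This isolates the only part requiring real work.

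Next I would fix $x_1\neq x_2$ (the case $x_1=x_2$ being trivial) and let $\curlyvee$ denote the line through them. By Lemma \ref{kq1b}, $F(\curlyvee)$ is a point, a ray, a line, or a parabola. In the first three cases $F(\curlyvee)$ is itself convex and contains both endpoints $F(x_1)$ and $F(x_2)$, so the entire segment already lies in $F(\curlyvee)\subseteq F(\mathcal{H})+\bigwedge$ and nothing further is needed.

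The substance is the parabola case. Write $\mathcal{C}=F(\curlyvee)$ with defining equation $\psi(x)=x^{T}Ax+a^{T}x+a_0=0$, normalizing the sign so that $A\succeq 0$. The two endpoints $\bar{x}=F(x_1)$ and $\bar{y}=F(x_2)$ lie on $\mathcal{C}$ and are distinct, since the parabola parametrization is injective (its second coordinate is affine in $t$ with nonzero slope, as seen in the proof of Lemma \ref{kq1b}). For an arbitrary interior point $z^{*}\in(\bar{x},\bar{y})$, Lemma \ref{kq1} yields $\psi(z^{*})<0$. I would then observe that the two boundary rays of the translated cone $z^{*}-\bigwedge$ are precisely $\Gamma_1=\{z^{*}+tb:t\leq 0\}$ and $\Gamma_2=\{z^{*}+tc:t\leq 0\}$, with $\{b,c\}$ linearly independent. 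Lemma \ref{kq1a} then forces one of them, say $\Gamma_1$, to meet $\mathcal{C}$, producing $w=z^{*}+t_0 b\in\mathcal{C}$ with $t_0\leq 0$. Hence $z^{*}=w+(-t_0)b$ with $-t_0\geq 0$, so $(-t_0)b\in\bigwedge$ and $z^{*}\in\mathcal{C}+\bigwedge\subseteq F(\mathcal{H})+\bigwedge$. Since $z^{*}$ was arbitrary and the endpoints already lie in $F(\mathcal{H})$, the whole segment $[F(x_1),F(x_2)]$ lies in $F(\mathcal{H})+\bigwedge$, which by the reduction completes the proof.

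I expect the only genuinely delicate point to be matching the geometry of $\bigwedge$ to the hypotheses of Lemma \ref{kq1a}: recognizing that $z^{*}\in\mathcal{C}+\bigwedge$ is equivalent to the downward cone $z^{*}-\bigwedge$ meeting $\mathcal{C}$, and that its two boundary rays are exactly the rays $\Gamma_1,\Gamma_2$ appearing in that lemma. The sign normalization of $A$, needed so that Lemmas \ref{kq1} and \ref{kq1a} apply simultaneously, is a minor but necessary bookkeeping step.
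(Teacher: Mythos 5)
Your proof is correct, and it uses the same three ingredients as the paper (Lemma \ref{kq1b} to classify $F(\curlyvee)$, Lemma \ref{kq1} for the sign of $\psi$ on a chord, Lemma \ref{kq1a} to intersect a backward ray with the parabola), but your opening reduction makes the argument genuinely shorter than the paper's. The paper takes two arbitrary points $u=\bar{u}+e_1$, $v=\bar{v}+e_2$ of $F(\mathcal{H})+\bigwedge$ and analyzes a point $w$ on $[u,v]$ directly; since $w$ need not lie on the chord $[\bar{u},\bar{v}]$ of the parabola, the paper must locate $w$ relative to that chord, which costs a two-case analysis (whether $\bar{u}$ or $\bar{v}$ lies in $w-\bigwedge$), a coordinate computation producing a point $w+\delta c\in[\bar{u},\bar{w}]$ with $\delta\leq 0$, and a further split on the sign of $\psi(w)$ with an Intermediate Value Theorem step. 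Your observation that convexity of $S+\bigwedge$ for a convex cone $\bigwedge$ reduces to showing $[F(x_1),F(x_2)]\subseteq F(\mathcal{H})+\bigwedge$ for points of $F(\mathcal{H})$ itself places the point under study on the chord, where Lemma \ref{kq1} gives $\psi(z^*)<0$ immediately and Lemma \ref{kq1a} applies in one stroke; all of the paper's bookkeeping (equations \eqref{ct09aa}--\eqref{ct9} and the IVT subcase) disappears. The trade-off is only that your version relies on the (standard, and correctly stated) absorption property $\bigwedge+\bigwedge\subseteq\bigwedge$ up front; everything else, including the sign normalization $A\succeq 0$ and the identification of the boundary rays of $z^*-\bigwedge$ with $\Gamma_1,\Gamma_2$, is handled correctly.
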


\begin{proof}
	Assume that $u\in F(\mathcal{H})+\bigwedge$, $v\in F(\mathcal{H})+\bigwedge$, and $w\in (u, v)$, we need to prove $w$ also belongs to $F(\mathcal{H})+\bigwedge$.

	\begin{figure}[htb]
		\begin{center}
			\includegraphics[scale=0.7]{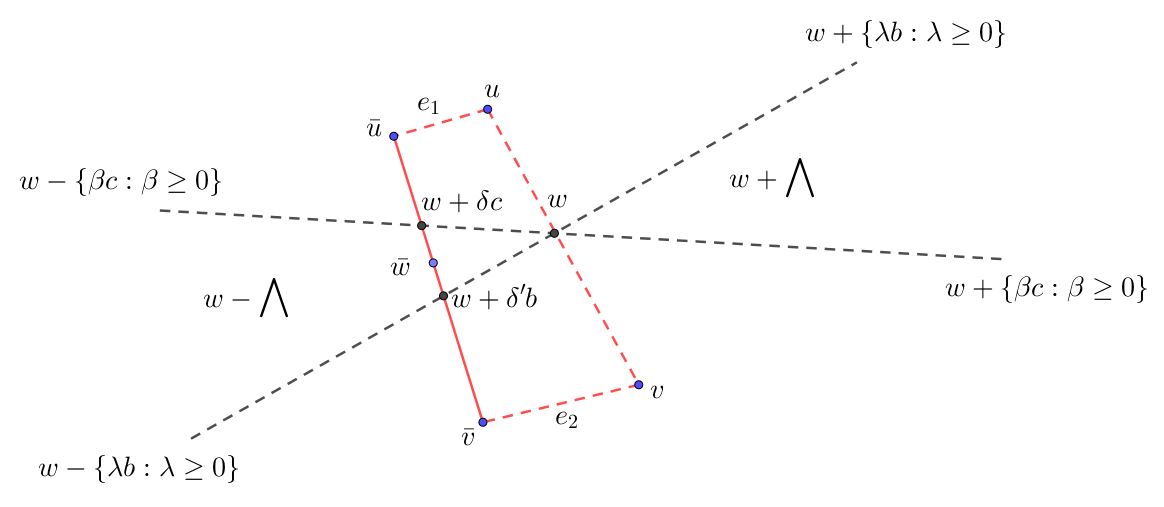}
		\end{center}
		\caption{}\label{pic1}
	\end{figure}
	
	It follows from $u, v\in F(\mathcal{H})+\bigwedge$ that there exist $\bar{u}, \bar{v}\in F(\mathcal{H})$, $e_1, e_2\in \bigwedge$ such that 
	\begin{equation}\label{ct1}
		u=\bar{u}+e_1, v=\bar{v}+e_2.
	\end{equation}

	Since $w\in (u, v)$, it follows that $w=\alpha u+\beta v$, where $\alpha, \beta\in (0, +\infty)$ and $\alpha+\beta=1.$ By \eqref{ct1}, $\alpha u=\alpha \bar{u}+\alpha e_1, \beta v=\beta\bar{v}+\beta e_2$. We have  
	\begin{eqnarray}
		w&=&\alpha u+\beta v \nonumber \\
		&=&\alpha \bar{u}+\alpha e_1 +\beta\bar{v}+\beta e_2 \nonumber\\
		&=&(\alpha \bar{u}+\beta\bar{v})+(\alpha e_1+\beta e_2) \nonumber\\
		&=&\bar{w}+(\alpha e_1+\beta e_2), \label{ct2} 
	\end{eqnarray}
	where  $\bar{w}=\alpha \bar{u}+\beta\bar{v}$. By the characteristics of $\alpha$ and $\beta$, as well as $e_1$ and $e_2$, we have 
	\begin{equation}\label{ct3}
		\bar{w}\in (\bar{u}, \bar{v}) \text{ and } \bar{w}\in w-\bigwedge.
	\end{equation}
	Then, in \eqref{ct3}, since $\bar{w}\in w-\bigwedge$, it follows that
	\begin{equation}\label{ct09aa}
		\begin{aligned}
			&\bar{w}&=&\quad w-\lambda b-\gamma c, \text{ for some } \lambda\geq 0 \text{ and } \gamma\geq 0.
		\end{aligned}
	\end{equation}
	\noindent{\bf Case 1:} If either $\bar{u}\in w-\bigwedge$ or $\bar{v}\in w-\bigwedge$. Then of course, either $w\in \bar{u}+\bigwedge$ or $w\in \bar{v}+\bigwedge$. This means that $w\in F(\mathcal{H})+\bigwedge.$
	
	\noindent{\bf Case 2:} Both $\bar{u}$ and $\bar{v}$ do not belong to $w-\bigwedge$. We  express ${\bar{u}-\bar{w}}$  in terms of the bases $\{b, c\}$ and assume that it is given by $\bar{u}-\bar{w}=sb+tc$, or equivalently, $\bar{u}=\bar{w}+sb+tc$. Since $\bar{w}\in (\bar{u}, \bar{v})$, there exists $l<0$ such that $\bar{v}=\bar{w}+l(sb+tc)$.
	
	Since $\bar{u}\not\in w-\bigwedge$ and $\bar{v}\not\in w-\bigwedge$, and by \eqref{ct3}, it follows that neither $(t\leq 0, s\leq 0)$ nor $(lt\leq 0, ls\leq 0)$ holds, where $l<0$. Thus $ts$ must be negative. Thus, without loss of generality, we assume that $s>0$ and  $t<0$;  otherwise, we swap the roles of $\bar{u}$ and $\bar{v}$. 
	
	Now, representing \( e_1 \) and \( e_2 \) in terms of the basis \( \{b, c\} \) (i.e., \( e_1 = \mu_1 b + \nu_1 c \), \( e_2 = \mu_2 b + \nu_2 c \)), and using \eqref{ct2}, we obtain 
	\begin{equation}\label{ct10aa}
		\begin{aligned}
			&\bar{u}&=&\quad \bar{w}+sb+tc\\
			& &=&\quad [w-\alpha e_1-\beta e_2]+sb+tc\\
			& &=&\quad [w-\alpha(\mu_1b+\nu_1c)-\beta(\mu_2b+\nu_2c)]+sb+tc\\
			& &=&\quad w+ (s-\alpha\mu_1-\beta\mu_2)b+(t-\alpha\nu_1-\beta\nu_2)c.\\
		\end{aligned}
	\end{equation}
	It follows from $e_1\in\bigwedge$, $e_2\in\bigwedge$ that $\mu_1, \nu_1,  \mu_2$ and $\nu_2$ are not negative. Note that in this case, $s>0$, $t<0$, $\alpha> 0$ and $\beta> 0$, we have 
	\begin{equation}\label{n01}
		(t-\alpha\nu_1-\beta\nu_2)<0. 
	\end{equation} 
	Therefore, since $\bar{u} =w+ (s-\alpha\mu_1-\beta\mu_2)b+(t-\alpha\nu_1-\beta\nu_2)c\not\in w-\bigwedge$, $(s-\alpha\mu_1-\beta\mu_2)$ must be positive. Consequently, $\lambda+(s-\alpha\mu_1-\beta\mu_2)>0$. Let
	\begin{equation}
		\delta_1=\dfrac{\lambda}{\lambda+(s-\alpha\mu_1-\beta\mu_2)}; ~\delta_2= \dfrac{s-\alpha\mu_1-\beta\mu_2}{\lambda+(s-\alpha\mu_1-\beta\mu_2)}.
\end{equation} 
	Then, it is easy that \begin{equation}\label{n00}
		\delta_1\geq 0, \delta_2\geq 0 \text{  and } \delta_1+\delta_2=1.
	\end{equation} Multiplying the vector $\bar{u}$ in \eqref{ct10aa}  by $\delta_1$, the vector  $\bar{w}$ in \eqref{ct09aa} by $\delta_2$, and then summing the results, we obtain:
	\begin{equation}\label{ct12aa}
		\delta_1\bar{u}+\delta_2\bar{w}=w+(\delta_1(t-\alpha\nu_1-\beta\nu_2)-\delta_2\gamma)c.
	\end{equation}
	Based on the characteristics of $\delta_1$, $\delta_2$, $t-\alpha\nu_1-\beta\nu_2$ and $\gamma$ as shown above (see  \eqref{ct09aa}, \eqref{n01} and \eqref{n00}), we obtain that $\delta_1\bar{u}+\delta_2\bar{w}\in[\bar{u}, \bar{w}]$ and $\delta:=(\delta_1(t-\alpha\nu_1-\beta\nu_2)-\delta_2\gamma)\leq 0.$ Hence, we have proved that
	\begin{equation}\label{ct9}
		\exists \delta\leq 0 \text{ such that } w+\delta c\in [\bar{u}, \bar{w}].
	\end{equation}

Take $\bar{x}, \bar{y}\in \mathcal{H}$ such that $F(\bar{x})=\bar{u}$ and $F(\bar{y})=\bar{v}$. Let $\curlyvee$ be the straight line that passes through $\bar{x}$ and $\bar{y}$. By Lemma \ref{kq1b}, $F(\curlyvee)$ is either a ray, a straight line or a parabola in $\mathbb{R}^2$.
	
	+) If $F(\curlyvee)$ is a ray or a straight line. Since $F(\curlyvee)$ passes though $\bar{u}$ and $\bar{v}$, so, it also passes though $\bar{w}$. It means that $\bar{w}\in F(\curlyvee)\subset F(\mathcal{H})$. Combining this with \eqref{ct09aa}, we obtain $w-\lambda b -\gamma c\in F(\mathcal{H})$ where $\lambda\geq 0$ and $\gamma\geq 0$. Therefore, $w\in F(\mathcal{H})+\bigwedge$.
	
	+) If $F(\curlyvee)$ is a parabola $\mathcal{C}$. Let $\psi(x)=x^TAx+a^Tx+a_0=0$ be the equation of parabola $\mathcal{C}$ in $\mathbb{R}^2$, where $A\succeq 0$. In this case, $\bar{u}\in \mathcal{C}$ and $\bar{v}\in \mathcal{C}$. By Lemma \ref{kq1}, we have  \begin{equation}\label{ct11}
		\psi(w+\delta c)\leq 0,
	\end{equation} where $w+\delta c$ is in \eqref{ct9}, 
	see Figure \ref{pic1}.
	
	If $\psi(w)\geq 0$, then by the Intermediate Value Theorem, there exists $w^*\in [w+\delta c, w]$ such that $\psi(w^*)=0.$ Since $\delta\leq 0$ and $w^*\in [w+\delta c, w]$, we have $w^*\in w-\bigwedge$. Moreover, $\psi(w^*)=0$ implies that $w^*\in F(\curlyvee)\subset F(\mathcal{H})$. Hence, we obtain that $w\in w^*+\bigwedge\subset F(\mathcal{H})+\bigwedge$.
	
	If $\psi(w)<0$, applying Lemma \ref{kq1a} with $\Gamma_1=\{w+t b: t\in (-\infty, 0]\}$ and $\Gamma_2=\{w+t c: t\in (-\infty, 0]\}$, one obtains the parabola $F(\curlyvee)$ and $\Gamma_1\cup\Gamma_2$ has at least one common point. Consequencely, $F(\mathcal{H})\cap (\{w\}-\bigwedge)\ne \emptyset$. Therefore, $w\in F(\mathcal{H})+\bigwedge$.
	
So, we have proved that for all cases, $w$ always belongs to $F(\mathcal{H})+\bigwedge$.
\end{proof} 

\begin{remark}\label{rm1}
In the proofs of Lemma \ref{kq1b} and Theorem \ref{kq2}, we rely solely on the properties of a quadratic mapping on a set and the affine nature of \(\mathcal{H}\), specifically that any line passing through two points in \(\mathcal{H}\) remains within \(\mathcal{H}\). Consequently, the conclusion remains valid if \(\mathcal{H}\) is replaced by an affine manifold \(\mathcal{M}\) contained in \(\mathcal{H}\).
\end{remark}

Applying Theorem \ref{kq2} and Remark \ref{rm1} along with a straightforward separation approach, we obtain the following result, which serves as the S-Lemma in Hilbert space -- an extension of the classical S-Lemma (see \cite{Yakubovich}, \cite{Terlaky}, and references therein).
\begin{corollary}Let $f(x)$ and $g(x)$ be quadratic functions on $\mathcal{M}$ $(\mathcal{M}=\mathcal{H}$ or an affine manifold contained in \(\mathcal{H})\). If there exists some \( x^* \in \mathcal{M} \) such that \( g(x^*) < 0 \), then the following statements are equivalent:  
	\begin{itemize}
		\item[(i)] $\not\exists x\in \mathcal{M}$ such that $f(x)<0$ and $g(x)\leq 0$.
		\item[(ii)] $\exists \lambda \geq 0$ such that $f(x)+\lambda g(x)\ge 0$ for all $x\in\mathcal{M}$.
	\end{itemize}
\end{corollary}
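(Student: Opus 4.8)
The plan is to prove the trivial implication by a one-line contradiction and to extract the hard implication from the convexity already furnished by Theorem \ref{kq2} (and Remark \ref{rm1}) via a single separation argument carried out in the image space $\mathbb{R}^2$, which is finite-dimensional irrespective of $\dim\mathcal{H}$. For the direction (ii)$\Rightarrow$(i) I would simply assume $\lambda\ge 0$ with $f(x)+\lambda g(x)\ge 0$ for all $x\in\mathcal{M}$ and suppose toward a contradiction that some $x$ satisfies $f(x)<0$ and $g(x)\le 0$; then $\lambda g(x)\le 0$ forces $f(x)+\lambda g(x)<0$, a contradiction. No further work is needed here.

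For the substantive direction (i)$\Rightarrow$(ii), I would first fix the cone by taking $b=(1,0)^T$ and $c=(0,1)^T$, so that $\bigwedge=\{(u_1,u_2):u_1\ge 0,\,u_2\ge 0\}$, and set $F=(f,g)$. By Theorem \ref{kq2} together with Remark \ref{rm1}, the set $C:=F(\mathcal{M})+\bigwedge$ is convex. The first real step is to recast hypothesis (i) as a geometric disjointness: I claim $C$ misses the open third quadrant $Q:=\{(u_1,u_2):u_1<0,\,u_2<0\}$. Indeed, every point of $C$ has the form $(f(x)+a,\,g(x)+d)$ with $a,d\ge 0$, so if it lay in $Q$ then $f(x)\le f(x)+a<0$ and $g(x)\le g(x)+d<0\le 0$, i.e. $f(x)<0$ and $g(x)\le 0$, contradicting (i).

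Having two disjoint convex sets $C$ and $Q$ in $\mathbb{R}^2$, I would invoke the finite-dimensional separation theorem to obtain a nonzero $(\mu_1,\mu_2)$ and $t\in\mathbb{R}$ with $\mu_1 u_1+\mu_2 u_2\ge t$ on $C$ and $\mu_1 u_1+\mu_2 u_2\le t$ on $Q$. Since $Q$ is an unbounded orthant, boundedness above of the linear functional on $Q$ forces $\mu_1\ge 0$ and $\mu_2\ge 0$ (and then $t\ge 0$); restricting the lower bound to $F(\mathcal{M})\subseteq C$ yields $\mu_1 f(x)+\mu_2 g(x)\ge 0$ for all $x\in\mathcal{M}$. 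The hard part will be the final normalization: to reach the exact form in (ii) I must guarantee $\mu_1>0$, after which $\lambda:=\mu_2/\mu_1\ge 0$ gives $f(x)+\lambda g(x)\ge 0$ on $\mathcal{M}$. This is precisely where the Slater-type hypothesis $g(x^*)<0$ is indispensable: if instead $\mu_1=0$, then $\mu_2>0$ and the inequality would read $g(x)\ge 0$ for all $x$, contradicting $g(x^*)<0$. Thus $\mu_1>0$ and (ii) follows. I would flag this last nondegeneracy step as the only delicate point of the argument, the genuinely difficult content (convexity of $F(\mathcal{M})+\bigwedge$) being already established, so that what remains is the routine separation sharpened only by the strict-feasibility assumption.
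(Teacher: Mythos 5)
Your argument is correct and is precisely the ``straightforward separation approach'' the paper alludes to but does not write out: the paper gives no proof of this corollary beyond citing Theorem \ref{kq2}, Remark \ref{rm1}, and separation. Your realization of that plan is sound --- the disjointness of $F(\mathcal{M})+\mathbb{R}^2_+$ from the open third quadrant, the sign constraints on the separating functional forced by the unboundedness of the quadrant, and the use of $g(x^*)<0$ to rule out $\mu_1=0$ are exactly the right steps.
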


When \(\mathcal{H} = \mathbb{R}^n\) and $\bigwedge=\mathbb{R}^2_+$, let \( H \) be a real matrix of order \( m \times n \), and let \( d \in \mathbb{R}^n \) and \( x_0 \in \mathbb{R}^n \). Then the set \( C = H^{-1}(d) = x_0 + \operatorname{ker} H \) is an affine manifold contained in \(\mathcal{H}\). Relying on Theorem \ref{kq2} and Remark \ref{rm1}, we obtain a stronger version of the result, known as the Relaxed Dines theorem.  
\begin{corollary}[The Relaxed Dines theorem, \cite{Flores-Carcamo}] Let $f: \mathbb{R}^n \rightarrow \mathbb{R}, g: \mathbb{R}^n \rightarrow \mathbb{R}$ be any quadratic functions, and $C \doteq H^{-1}(d)=x_0+\operatorname{ker} H$. If $\mu=\inf\{f(x): g(x)\leq 0\} \in \mathbb{R}$, then cone \[\left(F(C)-\rho(1,0)+\mathbb{R}_{++}^2\right)\text{ is convex for all } \rho \leq \mu.\]
\end{corollary}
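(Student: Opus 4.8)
The plan is to reduce the statement to a direct invocation of Theorem \ref{kq2} through Remark \ref{rm1}. First I would note that $C = x_0 + \operatorname{ker} H$ is an affine manifold contained in $\mathbb{R}^n = \mathcal{H}$, so Remark \ref{rm1} licenses replacing $\mathcal{H}$ by $C$ throughout. The key observation is that the closed positive orthant is precisely the cone $\bigwedge$ from the paper's construction: taking the linearly independent vectors $b = (1,0)^T$ and $c = (0,1)^T$ gives $\bigwedge = \{\lambda b + \beta c : \lambda \ge 0, \beta \ge 0\} = \mathbb{R}^2_+$. Theorem \ref{kq2} applied on $C$ with this choice then yields at once that $F(C) + \mathbb{R}^2_+$ is convex.

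Next I would transport this to the set in the statement. Since convexity is invariant under translation, $F(C) - \rho(1,0) + \mathbb{R}^2_+$ is convex for every $\rho \in \mathbb{R}$, in particular for every $\rho \le \mu$. The only genuine discrepancy is that the statement features the \emph{open} orthant $\mathbb{R}_{++}^2$, while Theorem \ref{kq2} produces the \emph{closed} orthant $\mathbb{R}^2_+$. I would bridge this using the elementary identity $\mathbb{R}^2_+ + \mathbb{R}_{++}^2 = \mathbb{R}_{++}^2$, which gives
\[
F(C) - \rho(1,0) + \mathbb{R}_{++}^2 = \big(F(C) - \rho(1,0) + \mathbb{R}^2_+\big) + \mathbb{R}_{++}^2 .
\]
The right-hand side is a Minkowski sum of two convex sets, hence convex. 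To obtain the convexity of the conic hull $\operatorname{cone}(\cdot)$ itself, I would then appeal to the standard fact that the conic hull of any convex set $S$ is convex: a conic combination $\lambda_1 s_1 + \lambda_2 s_2$ with $s_i \in S$, $\lambda_i \ge 0$ either vanishes or, after normalizing by $\lambda_1 + \lambda_2 > 0$, is a nonnegative multiple of a convex combination of $s_1, s_2 \in S$, and so lies in $\operatorname{cone}(S)$. Taking $S = F(C) - \rho(1,0) + \mathbb{R}_{++}^2$ closes the argument.

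I do not expect a serious obstacle: the entire substance of the corollary is carried by Theorem \ref{kq2}, and the rest is bookkeeping (translation, Minkowski sum, conic hull). The only point demanding a short argument is the passage from the closed orthant supplied by Theorem \ref{kq2} to the open orthant $\mathbb{R}_{++}^2$ in the statement, which the displayed identity resolves. I would also emphasize, for clarity, that neither the finiteness $\mu \in \mathbb{R}$ nor the restriction $\rho \le \mu$ is actually used in establishing convexity — the set and its conic hull are convex for \emph{every} $\rho$; these hypotheses instead serve to position the translated cone correctly for the downstream quadratic-programming and duality applications in which the Relaxed Dines theorem is employed.
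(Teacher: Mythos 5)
Your proposal is correct and follows exactly the route the paper intends: the paper offers no written proof of this corollary beyond the remark that it follows from Theorem \ref{kq2} and Remark \ref{rm1} with $\bigwedge=\mathbb{R}^2_+$ on the affine manifold $C$. Your additional bookkeeping (translation invariance, the identity $\mathbb{R}^2_+ + \mathbb{R}^2_{++}=\mathbb{R}^2_{++}$, convexity of the conic hull of a convex set) correctly fills in the steps the paper leaves implicit, and your observation that the hypotheses $\mu\in\mathbb{R}$ and $\rho\le\mu$ play no role in the convexity conclusion is consistent with the paper's claim to have obtained a stronger version of the result.
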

Finally, when \(\mathcal{H} = \mathbb{R}^n\), with \(\bigwedge = \mathbb{R}^2_+\) and \(\mathcal{M}\) as an affine manifold in \(\mathcal{H}\), we arrive at an elegant result established by Tuy et al., as stated in Corollary 10 of \cite{TT}. Likewise, for \(\mathcal{H} = \mathbb{R}^n\), a related and notable result was proven by Flores-Bazan et al., as stated in Theorem 4.19 of \cite{Flores-Opazo}.


\noindent Huu-Quang Nguyen\\
Department of Mathematics, School of Natural Sciences Education, Vinh University, Vinh, Nghe An, Vietnam\\
email: quangdhv@gmail.com

\end{document}